\documentclass[a4paper,12pt]{article}

\pagestyle{myheadings}

\usepackage[latin1]{inputenc}
\usepackage[T1]{fontenc}
\usepackage[english]{babel}

\usepackage{mathrsfs}
\usepackage{amscd}
\usepackage{amsfonts}
\usepackage{amsmath}
\usepackage{amssymb}
\usepackage{amstext}
\usepackage{amsthm}
\usepackage{amsbsy}

\usepackage{xspace}
\usepackage[all]{xy}
\usepackage{graphicx}
\usepackage{url}
\usepackage{latexsym}

\usepackage{graphicx} % support the \includegraphics command and options

% \usepackage[parfill]{parskip} % Activate to begin paragraphs with an empty line rather than an indent

%%% PACKAGES
\usepackage{booktabs} % for much better looking tables
\usepackage{array} % for better arrays (eg matrices) in maths
\usepackage{paralist} % very flexible & customisable lists (eg. enumerate/itemize, etc.)
\usepackage{verbatim} % adds environment for commenting out blocks of text & for better verbatim
\usepackage{subfig} % make it possible to include more than one captioned figure/table in a single float
% These packages are all incorporated in the memoir class to one degree or another...

%%% HEADERS & FOOTERS
\usepackage{fancyhdr} % This should be set AFTER setting up the page geometry
\pagestyle{fancy} % options: empty , plain , fancy
 % customise the layout...
\lhead{}\chead{}\rhead{}
\lfoot{}\cfoot{\thepage}\rfoot{}

%%% SECTION TITLE APPEARANCE
\usepackage{sectsty}
\allsectionsfont{\sffamily\mdseries\upshape} % (See the fntguide.pdf for font help)
% (This matches ConTeXt defaults)

%%% ToC (table of contents) APPEARANCE
\usepackage[nottoc,notlof,notlot]{tocbibind} % Put the bibliography in the ToC
\usepackage[titles,subfigure]{tocloft} % Alter the style of the Table of Contents

 % No bold!

\usepackage[textwidth=100pt,textsize=footnotesize,bordercolor=white,color=blue!30]{todonotes}
\usepackage{hyperref} 

\makeatletter
\newcommand*{\rom}[1]{\expandafter\@slowromancap\romannumeral #1@}
\makeatother

\theoremstyle{definition}

\newtheorem{fact}{fact}

\newtheorem{thm}[fact]{Theorem}

\newtheorem{corollary}[fact]{Corollary}
\newtheorem{defini}[fact]{Definition}

%opening
\title{A Note on the Decidability of the Necessity of Axioms}
\author{Merlin Carl}

\begin{document}

\maketitle

\begin{abstract}
A typical kind of question in mathematical logic is that for the necessity of a certain axiom: Given a proof of some statement $\phi$ in some axiomatic system $T$, one looks for minimal subsystems of $T$ that allow 
deriving $\phi$. In particular, one asks whether, given some system $T+\psi$, $T$ alone suffices to prove $\phi$. We show that this problem is undecidable unless $T+\neg\psi$ is decidable.
\end{abstract}

There are various places in mathematical logic where one is concerned with the necessity of certain axioms for the proof of a theorem; in set theory, typical questions are about
the necessity of the axiom of choice and large cardinal assumptions. In arithmetic, one is interested in the minimal degree of induction necessary for the proof of some statement.
Generally, it is hard to determine the answer. This suggests that these problems may be undecidable. We show that this is indeed the case for all the cases mentioned and in fact many more.
This is in fact an easy consequence of a result of Ehrenfeucht and Mycielski (see below). To the best of our knowledge, however, this consequence has so far not been noted or written
down. The purpose of this note is to change this.

\begin{defini}
The complexity of a proof is the number of symbols (quantifiers, junctors, variables, constants, relation and function symbols) occuring in it. If $B$ is a proof, then $|B|$ denotes its complexity.
If $T$ is a theory and $\phi$ is a provable statement of $T$, then $W_{T}(\phi)$ denotes $\text{min}\{|B|:B\text{ is a $T$-proof of }\phi\}$.
\end{defini}

The main ingredient of our proof is the following result of Ehrenfeucht and Mycielski (\cite{EM}):

\begin{thm}{\label{EM}}
If $T+\neg\alpha$ is undecidable, then there is no recursive function $f$ such that $W_{T}(\phi)\leq f(W_{T+\alpha}(\phi))$ holds for all theorems $\phi$ of $T$.
\end{thm}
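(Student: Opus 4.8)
The plan is to prove the contrapositive: assuming there is a recursive function $f$ with $W_T(\phi)\le f(W_{T+\alpha}(\phi))$ for every theorem $\phi$ of $T$, I will produce a decision procedure for the theory $T+\neg\alpha$, contradicting its undecidability. As is implicit throughout, I take $T$ to be recursively axiomatized, so that the relation ``$B$ is a $T$-proof of $\psi$'' is decidable, and $\alpha$ to be a single sentence, so that $T+\alpha$ and $T+\neg\alpha$ are recursively axiomatized as well. I may also assume $f$ is nondecreasing, replacing it if necessary by $n\mapsto\max_{k\le n}f(k)$, which is still recursive and dominates $f$.

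The first ingredient is the elementary observation that $\neg\alpha\to\phi$ has a \emph{uniformly short} proof in $T+\alpha$ for every sentence $\phi$: since $\alpha$ is an axiom of $T+\alpha$ and $\alpha\to(\neg\alpha\to\phi)$ is a propositional tautology admitting a derivation whose complexity is bounded by a fixed recursive function $g$ of $|\phi|$, one application of modus ponens yields $W_{T+\alpha}(\neg\alpha\to\phi)\le g(|\phi|)$; note that this holds unconditionally, as $\neg\alpha\to\phi$ is a logical consequence of $\alpha$ whatever $\phi$ is. The second ingredient is the link back to $T$: if $T+\neg\alpha\vdash\phi$, then by the deduction theorem $\neg\alpha\to\phi$ is a theorem of $T$, so the hypothesised inequality applies to it and gives $W_T(\neg\alpha\to\phi)\le f(W_{T+\alpha}(\neg\alpha\to\phi))\le f(g(|\phi|))$; conversely, \emph{any} $T$-proof of $\neg\alpha\to\phi$ witnesses $T+\neg\alpha\vdash\phi$. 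Combining the two directions:
\[
T+\neg\alpha\vdash\phi \quad\Longleftrightarrow\quad \text{there is a $T$-proof of } \neg\alpha\to\phi \text{ of complexity} \le f(g(|\phi|)).
\]

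Given this equivalence the decision procedure is immediate: on input $\phi$, compute the bound $N:=f(g(|\phi|))$, enumerate the finitely many candidate derivations of complexity at most $N$ (finitely many up to renaming of the at most $N$ variables that can occur, hence an effective finite search), test whether any of them is a $T$-proof of $\neg\alpha\to\phi$, and answer ``yes'' iff one is found. By the displayed equivalence this correctly decides $\{\phi : T+\neg\alpha\vdash\phi\}$, the desired contradiction. I expect the point requiring the most care to be the first ingredient — pinning down a genuinely recursive bound $g$ on the length of a $T+\alpha$-proof of $\neg\alpha\to\phi$ that is uniform in $\phi$ (in practice one just writes down one explicit such proof and measures it) — together with the routine but necessary verification that the derivations of complexity $\le N$ form an effectively enumerable finite set; the actual logical content is concentrated entirely in the displayed equivalence.
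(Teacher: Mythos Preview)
Your argument is correct and is essentially the original short proof given by Ehrenfeucht and Mycielski in \cite{EM}: exploit that $\neg\alpha\to\phi$ always has a $T+\alpha$-proof of length recursively bounded in $|\phi|$, then use the assumed recursive speed-up bound $f$ together with the deduction theorem to turn a bounded search among $T$-proofs into a decision procedure for $T+\neg\alpha$. Note, however, that the paper itself does \emph{not} prove this theorem: it is merely quoted as Theorem~\ref{EM} with a citation to \cite{EM} and then used as a black box in the proof of Theorem~\ref{decidingweakertheories}, so there is no paper-internal proof to compare your proposal against; what you have written is precisely the argument the reference supplies.
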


\begin{thm}{\label{decidingweakertheories}}
Let $T$ be a first-order theory and $\phi$ a statement such that $T+\neg\phi$ is undecidable. Then there is no effective procedure to decide whether, given a proof in $T+\phi$ of some statement $\psi$,
the statement $\psi$ is provable in $T$.
\end{thm}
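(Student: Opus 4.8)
The plan is to argue by contradiction and reduce to Theorem \ref{EM}, applied with $\alpha := \phi$; note that this application is legitimate precisely because $T+\neg\phi$ is undecidable by hypothesis. So suppose, towards a contradiction, that there is an effective procedure $D$ which, given a $(T+\phi)$-proof $B$ of a sentence $\psi$, decides whether $\psi$ is provable from $T$ alone. From $D$ I will manufacture a recursive function $f$ such that $W_{T}(\psi)\leq f(W_{T+\phi}(\psi))$ holds for every theorem $\psi$ of $T$, which directly contradicts Theorem \ref{EM}.

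The function $f$ is defined as follows. On input $n\in\N$, enumerate the finitely many $(T+\phi)$-proofs $B$ with $|B|\leq n$; for each such $B$ let $\psi_{B}$ be the sentence that $B$ proves, and run $D$ on $B$. Whenever $D$ answers that $\psi_{B}$ is provable in $T$, search systematically through all $T$-proofs until one of $\psi_{B}$ turns up and record its complexity; this search halts because $D$ is correct. Let $f(n)$ be the maximum of all complexities recorded in this way (and $f(n):=0$ if none were). Since only finitely many proofs $B$ are inspected and each triggered search terminates, $f$ is total recursive.

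It remains to verify the bound. Let $\psi$ be an arbitrary theorem of $T$; then $\psi$ is also a theorem of $T+\phi$, so $W_{T+\phi}(\psi)$ is defined, and we may fix a shortest $(T+\phi)$-proof $B^{\ast}$ of $\psi$, so that $|B^{\ast}|=W_{T+\phi}(\psi)=:n$. In the computation of $f(n)$ the proof $B^{\ast}$ is among those examined, and since $\psi=\psi_{B^{\ast}}$ is provable in $T$, the procedure $D$ reports this and the complexity of some $T$-proof of $\psi$ — hence a value at least $W_{T}(\psi)$ — is recorded. Thus $W_{T}(\psi)\leq f(n)=f(W_{T+\phi}(\psi))$. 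As $\psi$ was arbitrary, $f$ witnesses a recursive bound of exactly the kind excluded by Theorem \ref{EM} (with $\alpha=\phi$), and this contradiction shows that no such $D$ exists.

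Two remarks on the setup and on where the difficulty lies. First, I will state explicitly the standing assumption, already implicit in Theorem \ref{EM} and its notation $W_{T}$, that $T$ (equivalently $T+\phi$) is recursively axiomatized, so that ``$(T+\phi)$-proof'' is a decidable notion and proofs of bounded complexity can be effectively enumerated. Second, the only delicate point in the argument — and the place where the hypothesis on $D$ is genuinely used — is the totality of $f$: every branch of the computation of $f(n)$ must halt, and a spurious ``yes'' from $D$ would send the ensuing $T$-proof search into an infinite loop. Correctness of $D$ is exactly what rules this out.
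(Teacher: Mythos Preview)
Your proof is correct and follows essentially the same route as the paper's: assume a decider, use it to filter the finitely many $(T+\phi)$-proofs of complexity at most $n$, search for $T$-proofs of the surviving conclusions, and take the maximum complexity found to obtain a recursive bound $f$ contradicting Theorem~\ref{EM}. Your write-up is in fact more careful than the paper's, in that you explicitly verify the bound $W_{T}(\psi)\leq f(W_{T+\phi}(\psi))$ and isolate where recursive axiomatizability and the correctness of $D$ are used.
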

\begin{proof}
Assume for a contradiction that $P$ is a program deciding this question. Let $Q$ be a program proceeding as follows: Given a natural number $n$, compute the set $S_{n}$ of all
$T+\phi$-proofs of complexity at most $n$. Note that $S_{n}$ is finite and a code $c_{n}$ for $S_{n}$ can be computed uniformly in $n$. Using $P$, compute (a code for)
the subset $T_{n}\subseteq S_{n}$ of all elements of $S_{n}$ which prove a statement that is also provable in $T-\{\phi\}$. Using $T_{n}$, find (by exhaustively searching through the $T-\{\phi\}$-proofs)
a (finite) set $\mathbb{B}_{n}$ containing a $T-\{\phi\}$-proof for each element of $T_{n}$. Finally, output $\text{max}\{|B|:B\in\mathbb{B}_{n}\}$. Thus $Q$ computes, given $n$,
 an upper bound for the complexity of a $T-\{\phi\}$-proof
of a statement that has a $T$-proof of complexity $n$.\\
Now let $\psi$ be a statement provable in $T-\{\phi\}$, let $B$ be a $T$-proof of $\psi$ and let $Q(|B|)\downarrow=k$. 
Then, by construction of $Q$, there is a $T-\{\phi\}$-proof of $\psi$ of complexity at most $k$,
contradicting Theorem \ref{EM}.
\end{proof}

We note some particularly interesting special cases.

\begin{corollary}
\begin{enumerate}
 \item There is no effective procedure to decide whether a $ZFC$-theorem is provable in $ZF$ alone.
 \item Assuming the consistency of some large cardinal hypothesis $H$, there is no effective procedure to decide whether a $ZFC+H$-theorem is provable in $ZFC$ alone
 \item There is no effective procedure that maps $PA$-theorems $\phi$ to the degree of induction necessary for their proof, i.e. the smallest $n$ such that $I\Sigma_{n}\vdash\phi$
%To formulate this result, we pick an effective enumeration $(\phi_{i}|i\in\omega)$ of the $T$-theorems. (Such an enumeration can e.g. be obtained
%by letting $\phi_{n}$ be the statement proved by the $n$th proof in some effective enumeration of the $T$-proofs.)
\end{enumerate}
\end{corollary}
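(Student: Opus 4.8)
The plan is to obtain all three statements as instances of Theorem \ref{decidingweakertheories}. For each item I will exhibit a first-order theory $T$ and a single sentence $\phi$ so that $T+\phi$ is the stronger theory in question, and then verify the one hypothesis of Theorem \ref{decidingweakertheories}, namely that $T+\neg\phi$ is undecidable. In every case this undecidability is supplied by Church's theorem: the theory $T+\neg\phi$ will be recursively axiomatized, will interpret Robinson arithmetic $Q$, and will be consistent (granting the relevant consistency assumption), and any consistent recursively axiomatized theory interpreting $Q$ is undecidable. I also record the routine reduction that lets me read the stated conclusions off Theorem \ref{decidingweakertheories}: deciding, for a given $(T+\phi)$-theorem $\psi$ presented merely as a statement, whether $\psi$ is $T$-provable would in particular decide the same question when $\psi$ is presented by an explicit $(T+\phi)$-proof (just read $\psi$ off the proof); since the latter is ruled out by Theorem \ref{decidingweakertheories}, the former is impossible as well.

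For (1) I take $T=ZF$ and $\phi=\mathrm{AC}$, so that $T+\phi=ZFC$. Assuming $\mathrm{Con}(ZF)$ (without which the claim is vacuous, since an inconsistent $ZF$ proves everything and $ZF$-provability is then trivially decidable), the theory $ZF+\neg\mathrm{AC}$ is consistent by the classical independence results, e.g.\ the permutation models of Fraenkel--Mostowski or Cohen's symmetric extensions. It is recursively axiomatized and interprets $Q$, hence is undecidable, and Theorem \ref{decidingweakertheories} applies. For (2) I take $T=ZFC$ and $\phi=H$, so that $T+\phi=ZFC+H$. Granting $\mathrm{Con}(ZFC+H)$ we have $\mathrm{Con}(ZFC)$, and moreover $ZFC\nvdash H$: for the usual large cardinal axioms $ZFC+H\vdash\mathrm{Con}(ZFC)$, so $ZFC\vdash H$ would give $ZFC\vdash\mathrm{Con}(ZFC)$, contradicting G\"odel's second incompleteness theorem. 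Hence $ZFC+\neg H$ is consistent; being recursively axiomatized and interpreting $Q$, it is undecidable, and Theorem \ref{decidingweakertheories} again applies.

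The genuinely delicate case is (3), and this is where I expect the main obstacle, because the induction hierarchy is presented by \emph{schemes} rather than by single sentences, whereas Theorem \ref{decidingweakertheories} speaks of a single $\phi$. Here I would lean on the classical fact that $I\Sigma_n$ is finitely axiomatizable for $n\geq 1$ (via $\Sigma_n$ partial truth definitions), so that $I\Sigma_{n+1}$ is $I\Sigma_n$ together with finitely many further axioms, i.e.\ $I\Sigma_{n+1}=I\Sigma_n+\sigma$ for a single sentence $\sigma$. Taking $T=I\Sigma_n$ and $\phi=\sigma$ (say $n=1$), the strictness $I\Sigma_n\subsetneq I\Sigma_{n+1}$, which follows from $I\Sigma_{n+1}\vdash\mathrm{Con}(I\Sigma_n)$ together with G\"odel's theorem, shows $I\Sigma_n\nvdash\sigma$, so $I\Sigma_n+\neg\sigma$ is consistent; it is recursively axiomatized and interprets $Q$, hence undecidable, and Theorem \ref{decidingweakertheories} yields that one cannot decide, from an $I\Sigma_{n+1}$-proof of $\psi$, whether $\psi$ is provable in $I\Sigma_n$. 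Finally I convert this into the stated non-computability: a procedure computing, for each $PA$-theorem $\psi$, the least $m$ with $I\Sigma_m\vdash\psi$ would let me decide whether that least $m$ is $\leq n$, i.e.\ whether $\psi$ is provable in $I\Sigma_n$, contradicting the previous sentence. Thus the whole difficulty of the corollary is concentrated in securing the single-sentence axiomatization $\sigma$ and the strictness of the hierarchy, so that the scheme-based formulation is brought within the single-statement hypothesis of Theorem \ref{decidingweakertheories}.
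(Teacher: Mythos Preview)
Your argument is correct and follows the same route as the paper: apply Theorem~\ref{decidingweakertheories} with $T=ZF$, $\phi=\mathrm{AC}$ for (1), with $T=ZFC$, $\phi=H$ for (2), and for (3) use partial truth predicates to compress $\Sigma_n$-induction into a single sentence so that the pair $T=I\Sigma_1$, $T+\phi=I\Sigma_2$ fits the theorem, then reduce the hypothetical degree-of-induction function to a decision procedure for $I\Sigma_1$-provability of $I\Sigma_2$-theorems. Your justifications (independence of $\mathrm{AC}$, G\"odel's second incompleteness theorem for the strictness statements, the remark distinguishing input-as-proof from input-as-theorem) are more detailed than the paper's one-line treatments, but the underlying strategy is identical.
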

\begin{proof}
For (1) and (2), this follows from the observation that every consistent recursive extension of $ZF$ is undecidable.\\
Concerning (3), assume for a contradiction that $P$ is a program that, given a (code for) a $PA$-theorem $\psi$, outputs the smallest $n$ such that $\psi$ is provable in $I\Sigma_{n}$ (and does not halt when
the input is not provable in $PA$). We use the definability of bounded truth predicates in arithmetic to write the induction axioms for $\Sigma_n$ formulas as a single formula $\phi_{n}$ for each $n\in\omega$.
Now, every recursive consistent extension of $I\Sigma_{1}$ (i.e. $PA^{-}+\phi_{1}$) is undecidable. Since $\phi_{2}$ is not implied by $I\Sigma_{1}$, so $PA^{-}+\phi_{1}+\neg\phi_{2}$
is consistent and hence undecidable. But, as $I\Sigma_{2}$-theorems are $PA$-theorems, $P$ easily allows us to decide whether some theorem $\psi$ of $I\Sigma_{2}$ is provable in $I\Sigma_{1}$, a contradiction to Theorem \ref{decidingweakertheories}.
\end{proof}

\end{document}